\documentclass[12pt]{amsart}
\usepackage{epsfig,color}

\headheight=6.15pt \textheight=8in \textwidth=6.5in
\oddsidemargin=0in \evensidemargin=0in \topmargin=0in

\setcounter{section}{-1}
\theoremstyle{definition}

\newtheorem{theorem}{Theorem}[section]
\newtheorem{definition}[theorem]{Definition}
\newtheorem{conjecture}[theorem]{Conjecture}

\newtheorem{lemma}[theorem]{Lemma}
\newtheorem{remark}[theorem]{Remark}
\newtheorem{corollary}[theorem]{Corollary}

\numberwithin{equation}{section}

\newcommand{\abs}[1]{\lvert#1\rvert}

\begin{document}

\title{Self-Shrinkers With Second Fundamental Form of Constant Length}

\author{Qiang Guang}

\address{}

\curraddr{}
\email{qguang@math.mit.edu}   
\thanks{}

\tolerance=1
\emergencystretch=\maxdimen
\hyphenpenalty=10000
\hbadness=10000

\begin{abstract}
In this note, we give a new and simple proof of a result in {\cite{DX1}} which states that any smooth complete self-shrinker in $\mathbb{R}^3$ with second fundamental form of constant length must be a generalized cylinder $\mathbb{S}^k \times \mathbb{R}^{2-k}$ for some $k\leq2$. Moreover, we prove a gap theorem for smooth self-shrinkers in all dimensions.  
\end{abstract}

\maketitle

\section{Introduction}
A one-parameter family of hypersurfaces $M_t \subset \mathbb{R}^{n+1}$ flows by mean curvature if
\begin{equation}
\partial_{t}x=-H\mathbf{n},
\end{equation}
where $H$ is the mean curvature, $\mathbf{n}$ is the outward pointing unit normal and $x$ is the position vector.

We call a hypersurface ${\Sigma}^{n} \subset \mathbb{R}^{n+1}$ a $self$-$shrinker$ if it satisfies
\begin{equation}
H=\frac{\langle x,\mathbf{n}\rangle}{2}.
\end{equation}
Under the mean curvature flow (``MCF"), $\Sigma$ is just moving by rescalings, i.e., $\Sigma_t\equiv \sqrt{-t}\Sigma$ gives a MCF. 

Self-shrinkers play a key role in the study of MCF. By Huisken's montonicity formula \cite{Hui90} and an argument of Ilmanen \cite{I1} and White, self-shrinkers provide all singularity models of the MCF. Although there are infinitely many of them, we only know few embedded complete examples, see \cite{A}, \cite{Ch}, \cite{KM}, \cite{M1} and \cite{N1}. Moreover, numerical results showed that it is impossible to give a complete classification of self-shrinkers in higher dimensions. However, under certain conditions, there are many classification results of self-shrinkers. In {\cite{CM1}}, Colding and Minicozzi proved that the only smooth complete embedded self-shrinkers with polynomial volume growth and $H\geq 0$ in $\mathbb{R}^{n+1}$ are generalized cylinders $\mathbb{S}^k \times \mathbb{R}^{n-k}$ (where the $\mathbb{S}^k$ has radius $\sqrt{2k}$). A natural question is under what other conditions can we conclude that a self-shrinker is a generalized cylinder. We are interested in those conditions involving the squared norm of the second fundamental form, i.e., $\abs{A}^2$. First, we consider the following question.

\begin{conjecture}\label{3}
${\Sigma}^{n} \subset \mathbb{R}^{n+1}$ is a smooth complete embedded self-shrinker with polynomial volume growth. If $\abs{A}^2=constant$, then $\Sigma$ is a generalized cylinder.
\end{conjecture}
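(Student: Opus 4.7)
The plan is to exploit the Colding--Minicozzi drift Laplacian $\mathcal{L} = \Delta - \tfrac{1}{2}\langle x, \nabla\cdot\rangle$, which is self-adjoint with respect to the Gaussian weight $e^{-\abs{x}^2/4}$ and to which the polynomial volume growth hypothesis gives access via integration by parts. The essential ingredient is the Simons-type identity valid on every self-shrinker in $\mathbb{R}^{n+1}$,
\begin{equation*}
\tfrac{1}{2}\mathcal{L}\abs{A}^2 = \abs{\nabla A}^2 + \tfrac{1}{2}\abs{A}^2 - \abs{A}^4.
\end{equation*}
Substituting the hypothesis $\abs{A}^2 \equiv c$ makes the left-hand side vanish and delivers
\begin{equation*}
\abs{\nabla A}^2 = c\bigl(c - \tfrac{1}{2}\bigr).
\end{equation*}
Nonnegativity forces the trichotomy $c = 0$, $c = \tfrac{1}{2}$, or $c > \tfrac{1}{2}$.

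Next, I would handle the two extremal cases directly. If $c = 0$ then $\Sigma$ is totally geodesic and, by the self-shrinker equation $H = \langle x,\mathbf{n}\rangle/2$, a hyperplane through the origin -- the degenerate generalized cylinder $\mathbb{R}^n$. If $c = \tfrac{1}{2}$ then $\nabla A \equiv 0$, so the second fundamental form is parallel; by Lawson's classification of complete hypersurfaces in Euclidean space with parallel second fundamental form together with the embeddedness and self-shrinker constraints (which pin the sphere factor's radius to $\sqrt{2k}$), $\Sigma$ must be $\mathbb{S}^k \times \mathbb{R}^{n-k}$ for some $1 \le k \le n$, and one verifies $\abs{A}^2 = \tfrac{1}{2}$ on each of these.

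The crux is excluding $c > \tfrac{1}{2}$. My approach is to iterate the drift identity: in this regime $\abs{\nabla A}^2$ is itself a positive constant, so I would apply the next-order Simons identity for self-shrinkers, of the schematic form
\begin{equation*}
\tfrac{1}{2}\mathcal{L}\abs{\nabla A}^2 = \abs{\nabla^2 A}^2 + P\bigl(\abs{A}^2\bigr)\abs{\nabla A}^2 + R(A,\nabla A),
\end{equation*}
with $P$ a polynomial and $R$ a scalar quartic in the components of $A$ and $\nabla A$. Setting its left-hand side to zero would pin $\abs{\nabla^2 A}^2$ as an explicit constant in $c$. I would try to combine this with the refined Kato inequality, the Gaussian integration-by-parts identity (licit by polynomial volume growth), and a pointwise analysis of the eigenvalues of the shape operator -- viewed as a smooth field of symmetric matrices of constant Frobenius norm on $\Sigma$ -- to force either an algebraic contradiction in $c$ or directly to collapse the system onto $c = \tfrac{1}{2}$.

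The main obstacle is precisely this exclusion of $c > \tfrac{1}{2}$ in arbitrary dimension. In $\mathbb{R}^3$, as treated by Ding--Xin and revisited in this paper, one can exploit the one-dimensional structure of the $n=2$ Codazzi system to directly constrain the two principal curvatures, a technique with no obvious higher-dimensional analogue. A proof in general $n$ appears to need a genuinely new ingredient -- either a sharper Simons-type inequality that distinguishes the cylinders from all other configurations with $\abs{A}^2$ constant, or a rigidity theorem for $\mathcal{L}$-parallel symmetric $(0,2)$-tensors on polynomially-volume-growing self-shrinkers. Locating such a mechanism is where I expect the real difficulty to lie, which is why the statement here is still advertised only as a conjecture.
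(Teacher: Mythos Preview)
The statement you are attempting is labeled a \emph{conjecture}, and the paper does not prove it in full generality; it only establishes the case $n=2$ as a separate theorem. Your proposal correctly isolates the trichotomy $c=0$, $c=\tfrac{1}{2}$, $c>\tfrac{1}{2}$ from the Simons identity and handles the first two cases appropriately (your appeal to Lawson's parallel-$A$ classification for $c=\tfrac{1}{2}$ is valid, though the paper instead invokes the Cao--Li/Colding--Minicozzi result directly). You also honestly identify that excluding $c>\tfrac{1}{2}$ is the genuine obstacle and that the full statement remains open, so there is no error to flag.

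Your proposed attack on $c>\tfrac{1}{2}$ via the second-order identity for $\mathcal{L}\abs{\nabla A}^2$ is precisely the Ding--Xin route the paper quotes in its introduction and then deliberately sets aside. The paper's own contribution in dimension two takes a genuinely different and more elementary path: rather than iterate Simons-type identities, it analyzes the point $p\in\Sigma$ where $\abs{x}$ attains its minimum (which exists by properness and the fact that every self-shrinker meets the sphere of radius $2$). At such a point $x^T=0$, so $\nabla H(p)=0$, and the two relations $h_{11}h_{111}+h_{22}h_{221}=0$ and $h_{11}h_{112}+h_{22}h_{222}=0$ coming from constancy of $\abs{A}^2$ combine with $\nabla H(p)=0$ to force either $\nabla A(p)=0$ (whence $c=\tfrac{1}{2}$) or $h_{11}(p)=h_{22}(p)$, in which case $c = H^2(p)/2 = \abs{x}^2(p)/8 \le \tfrac{1}{2}$. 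This pointwise argument at the minimum of $\abs{x}$ is the paper's new idea and, as the author remarks, has no obvious higher-dimensional analogue---consistent with your own assessment of where the difficulty lies.
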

The case $n=1$ follows from a more general result by  Abresch and Langer \cite{AbL} stating that the only smooth complete and embedded self-shrinkers in $\mathbb{R}^2$ are the lines and a round circle. In $\mathbb{R}^3$, i.e., $n=2$, the above conjecture was proved by Ding and Xin \cite{DX1} using the following identity

\begin{equation}\label{5}
\frac{1}{2} \mathcal{L}\abs{\nabla A }^2=\abs{ {\nabla}^2 A }^2+(1-\abs{A}^2)\abs{\nabla A }^2-3 \Xi-\frac{3}{2}\abs{\nabla \abs{A}^2 }^2
\end{equation}
where the operator $\mathcal{L}=\Delta-\langle\frac{x}{2},\nabla \cdot\rangle$, $h_{ij}$ is the second fundamental form and $\Xi=\sum_{i,j,k,l,m} h_{ijk}h_{ijl}h_{km}h_{ml}-2 \sum_{i,j,k,l,m}h_{ijk}h_{klm}h_{im}h_{jl}$.

In this note, we give a new and simple proof of the above result without heavy computation, more precisely, we prove the following theorem.

\begin{theorem}\label{8}
Let $\Sigma^2 \subset \mathbb{R}^3$ be a smooth complete embedded self-shrinker with polynomial volume growth. If the second fundamental form of $\Sigma^2$ is of constant length, i.e., ${\lvert A \rvert}^2$=constant, then $\Sigma^2$ is a generalized cylinder $\mathbb{S}^k \times \mathbb{R}^{2-k}$ for $k \leq 2$.
\end{theorem}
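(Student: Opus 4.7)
The strategy I propose is to push the drift-Laplacian identities for self-shrinkers against the Gaussian weight $e^{-|x|^2/4}$---whose decay, together with polynomial volume growth, legitimises integration by parts on $\Sigma$---and to close the argument with the Colding-Minicozzi classification of self-shrinkers with $H\geq 0$.

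The starting point is the Simons-type identity
\[
\tfrac{1}{2}\mathcal{L}|A|^2 = |\nabla A|^2 + \bigl(\tfrac{1}{2} - |A|^2\bigr)|A|^2,
\]
valid on any self-shrinker. Plugging in $|A|^2\equiv c$ makes the left-hand side vanish, yielding the pointwise relation $|\nabla A|^2 = c(c-\tfrac{1}{2})$. Nonnegativity forces $c=0$ or $c\geq\tfrac{1}{2}$. In the first case $A\equiv 0$, and $\Sigma$ is a plane through the origin (the $k=0$ case). In the borderline case $c=\tfrac{1}{2}$ we get $\nabla A\equiv 0$; the classical classification of surfaces in $\mathbb{R}^3$ with parallel second fundamental form, combined with the self-shrinker equation and the normalization $|A|^2=\tfrac{1}{2}$, pinpoints $\mathbb{S}^2(2)$ and $\mathbb{S}^1(\sqrt{2})\times\mathbb{R}$ (the $k=2$ and $k=1$ cases).

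The main obstacle is to rule out the range $c>\tfrac{1}{2}$. Here my plan is to use the companion scalar identity $\mathcal{L}H = \bigl(\tfrac{1}{2}-c\bigr)H$, also a direct consequence of Simons on self-shrinkers, which makes $H$ an $-\mathcal{L}$-eigenfunction with positive eigenvalue $c-\tfrac{1}{2}$. Weighted integration by parts gives $\int H\,e^{-|x|^2/4}=0$ and $(c-\tfrac{1}{2})\int H^2\,e^{-|x|^2/4} = \int|\nabla H|^2\,e^{-|x|^2/4}$: the first forces $H$ to change sign unless $H\equiv 0$, and the latter is excluded since a complete minimal self-shrinker with polynomial volume growth is a plane, contradicting $c>\tfrac{1}{2}$. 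The plan is then to combine this with the traceless companion
\[
\tfrac{1}{2}\mathcal{L}|\mathring{A}|^2 = |\nabla\mathring{A}|^2 + \bigl(\tfrac{1}{2}-c\bigr)|\mathring{A}|^2,
\]
where $\mathring{A}:=A-\tfrac{H}{2}g$ has $|\mathring{A}|^2 = c - H^2/2$, together with the Kato bound $|\nabla H|^2\leq 2|\nabla A|^2 = 2c(c-\tfrac{1}{2})$ and iterated weighted integration by parts, to extract a strict integral inequality forcing $|\mathring{A}|\equiv 0$; then $\Sigma$ is totally umbilic, the self-shrinker equation forces $|A|^2=\tfrac{1}{2}$, and we contradict $c>\tfrac{1}{2}$. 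The genuinely delicate point---where the promised simplification over \cite{DX1} must lie---is to choose the right test function, since several natural candidates (pairing the $|\mathring{A}|^2$-identity with $1$, with $|\mathring{A}|^2$, or with $H^2$) turn out via the $H$-eigenfunction relation to be tautologies yielding no new information; this is precisely where identity (\ref{5}) was needed in Ding-Xin, and a lighter substitute must be found here.
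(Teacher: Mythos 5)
Your reduction of the problem is fine as far as it goes: from $\tfrac{1}{2}\mathcal{L}|A|^2=|\nabla A|^2+(\tfrac12-|A|^2)|A|^2$ and $|A|^2\equiv c$ you correctly get $|\nabla A|^2=c(c-\tfrac12)$ pointwise, hence $c=0$ or $c\ge\tfrac12$, and the cases $c=0$ and $c=\tfrac12$ are handled (indeed, $|A|^2\le\tfrac12$ alone already gives the conclusion by the Cao--Li/Colding--Minicozzi result, Corollary \ref{20} of the paper). But the entire content of the theorem is the exclusion of $c>\tfrac12$, and there your proposal does not close. As you yourself observe, the identities you list --- $\int H e^{-|x|^2/4}=0$, $(c-\tfrac12)\int H^2 e^{-|x|^2/4}=\int|\nabla H|^2 e^{-|x|^2/4}$, and the traceless identity for $|\mathring{A}|^2=c-H^2/2$ --- are all algebraic consequences of the single eigenfunction equation $\mathcal{L}H=(\tfrac12-c)H$ and of the constancy of $|A|^2$; pairing them with any of the natural test functions returns a tautology. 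Establishing only that $H$ changes sign is not a contradiction (the cylinder itself has sign-changing $H$ only in a trivial sense, but nothing a priori prevents a hypothetical shrinker with $c>\tfrac12$ from having sign-changing $H$). So the proposal, as written, is a plan with the decisive step explicitly missing.

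The paper's actual mechanism is pointwise, not integral, and is worth recording since it is the ``lighter substitute'' you were looking for. One first checks $0\notin\Sigma$ (if $x=0$ then $H=\nabla H=0$ there, which together with $\nabla|A|^2=0$ forces $\nabla A=0$ at that point, hence $c=\tfrac12$ by your own identity, and the resulting sphere/cylinder miss the origin). By properness one takes $p$ minimizing $|x|$; there $x^T=0$, so $4H^2(p)=|x|^2(p)$ and $\nabla H(p)=0$. In a principal frame at $p$, the two linear conditions $\nabla H=0$ and $\nabla|A|^2=0$ on $(h_{111},h_{221})$ and $(h_{112},h_{222})$ force either $\nabla A(p)=0$ (whence $c=\tfrac12$) or $h_{11}=h_{22}$ at $p$; in the umbilic case $c=2h_{11}^2=H^2(p)/2=|x|^2(p)/8\le\tfrac12$, because every complete self-shrinker in $\mathbb{R}^3$ meets the sphere of radius $2$, so the minimum of $|x|$ is at most $2$. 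If you want to salvage your write-up, you must either import this minimum-point argument or find a genuinely new integral quantity (which, per your own analysis, cannot be built from $H$ and $|A|^2$ alone --- this is exactly why Ding--Xin needed the $|\nabla A|^2$-evolution identity (\ref{5})).
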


The key idea in the proof is to analyze the point where  $\lvert x \rvert$ achieves its minimum. We mention that our method does not apply to higher dimensions to prove the Conjecture \ref{3}. \\

For self-shrinkers, there exists some gap phenomenon for the squared norm of the second fundamental form. Cao and Li \cite{CL1} proved that any smooth complete self-shrinker with polynomial volume growth and $\abs{A}^2 \leq \frac{1}{2}$ in arbitrary codimension is a generalized cylinder. Colding, Ilmanen and Minicozzi \cite{CM5} showed that generalized cylinders are rigid in a strong sense that any self-shrinker which is sufficiently close to one of generalized cylinders on a large and compact set must itself be a generalized cylinder. Using this result we prove that any self-shrinker with $\abs{A}^2$ sufficiently close to $\frac{1}{2}$ must also be a generalized cylinder.

\begin{theorem}\label{10}
Given $n$ and $\lambda_0$, there exists $\delta=\delta(n,\lambda_0)>0$ so that if $ \Sigma^n \subset \mathbb{R}^{n+1}$ is a smooth embedded self-shrinker with entropy $\lambda(\Sigma) \leq \lambda_0$ and
\begin{itemize}
\item $\abs{A}^2 \leq \frac{1}{2}+\delta$,
\end{itemize}
then $\Sigma^n$ is a generalized cylinder $\mathbb{S}^k \times \mathbb{R}^{n-k}$ for some $k\leq n$.
\end{theorem}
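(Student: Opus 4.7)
The plan is to argue by contradiction using a compactness argument, combined with the Colding--Ilmanen--Minicozzi (CIM) rigidity theorem for generalized cylinders and the Cao--Li gap result already cited. Suppose the conclusion fails for some $n$ and $\lambda_0$; then there exists a sequence of smooth embedded self-shrinkers $\Sigma_i\subset\mathbb{R}^{n+1}$ with $\lambda(\Sigma_i)\leq\lambda_0$ and $\abs{A_{\Sigma_i}}^2\leq\tfrac{1}{2}+\tfrac{1}{i}$, none of which is a generalized cylinder. The entropy bound provides polynomial volume growth for every $\Sigma_i$, while the uniform bound $\abs{A_{\Sigma_i}}^2\leq 1$ yields uniform higher derivative estimates for $A$ via the shrinker analogue of Shi's estimates.

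To produce a convergent subsequence I would first locate a point near the origin on each $\Sigma_i$. At a point $p_i$ where $\abs{x}^2$ attains its minimum on $\Sigma_i$, the tangential component of $x$ vanishes, so the self-shrinker equation $H=\langle x,\mathbf{n}\rangle/2$ gives $x=2H\mathbf{n}$, whence
\[
\abs{p_i}^2 \;=\; 4H^2 \;\leq\; 4n\,\abs{A}^2 \;\leq\; 2n+\tfrac{4n}{i}.
\]
Thus $\{p_i\}$ is uniformly bounded. The uniform curvature and entropy bounds, combined with standard area control in balls, let one extract a subsequence along which $(\Sigma_i,p_i)$ converges smoothly on compact subsets of $\mathbb{R}^{n+1}$ to a smooth complete embedded self-shrinker $\Sigma_\infty$. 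By lower semicontinuity of entropy, $\lambda(\Sigma_\infty)\leq\lambda_0$, so $\Sigma_\infty$ has polynomial volume growth, and the limit clearly satisfies $\abs{A_{\Sigma_\infty}}^2\leq\tfrac{1}{2}$. By the Cao--Li theorem quoted above, $\Sigma_\infty$ must be a generalized cylinder $\mathbb{S}^k\times\mathbb{R}^{n-k}$ for some $k\leq n$.

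Smooth convergence then ensures that for every $R>0$, $\Sigma_i\cap B_R$ is $C^2$-close to $\Sigma_\infty\cap B_R$ for all sufficiently large $i$. Applying the CIM rigidity theorem --- which states that any smooth embedded self-shrinker with entropy $\leq\lambda_0$ that is sufficiently $C^2$-close on a sufficiently large compact set to a generalized cylinder must itself be such a cylinder --- we conclude that $\Sigma_i$ is a generalized cylinder for large $i$, contradicting our choice. The quantitative dependence $\delta=\delta(n,\lambda_0)$ drops out of the contradiction scheme in the usual way.

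The main obstacle is the compactness step. One has to verify that the limit $\Sigma_\infty$ is nonempty, non-degenerate (i.e., an honest smooth hypersurface rather than a multiplicity-two plane or something collapsed), and embedded, and that the topology of convergence supplied by the compactness argument matches the precise hypothesis needed to invoke the CIM rigidity theorem. The uniform base point inside a fixed ball, the uniform curvature bound, the entropy bound, and standard Allard/White-type arguments together handle these issues, but the bookkeeping requires care --- in particular, the $k$ in the limit cylinder $\mathbb{S}^k\times\mathbb{R}^{n-k}$ may depend on the subsequence, and one applies CIM separately to each possible $k$.
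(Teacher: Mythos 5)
Your proposal is correct and follows essentially the same contradiction-plus-compactness scheme as the paper: extract a smooth, multiplicity-one limit of the sequence $\Sigma_i$ (the paper quotes the Colding--Ilmanen--Minicozzi compactness lemma directly rather than rebuilding it from base points and Shi-type estimates), observe the limit satisfies $\abs{A}^2\leq\tfrac12$ and is hence a cylinder, and then invoke the CIM rigidity theorem to force $\Sigma_i$ itself to be a cylinder for large $i$. The only cosmetic difference is that the paper applies the rigidity theorem in its ``$H\geq 0$ and $\abs{A}\leq C$ on $B_R$'' form, deducing $H\geq 0$ from closeness to the limit cylinder, whereas you quote the ``close to a cylinder on a large compact set'' formulation; both are available in the cited reference.
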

\begin{remark}
It is expected that we could remove the entropy bound in Theorem \ref{10} in dimensional two case. In other words,  the bound for $\abs{A}^2$ may imply the entropy bound in  $\mathbb{R}^3$. Note that in $\mathbb{R}^3$, if $\Sigma^2$ is a closed self-shrinker with ${\lvert A \rvert}^2 \leq C$, where $C$ is a constant less than 1. Then by \emph{Gauss-Bonnet Formula}, one can easily get that the genus of $\Sigma^2$ is 0 and therefore obtain an entropy bound for $\Sigma^2$. 
\end{remark}

\section{Background and Preliminaries}
In this section, we recall some background and preliminaries for self-shrinkers from \cite{CM1}.  Throughout this note, we always assume self-shrinkers to be smooth complete embedded, without boundary and with polynomial volume growth. 

Let $\Sigma \subset \mathbb{R}^{n+1}$ be a hypersurface, then $\Delta$, div, and $\nabla$ are the Laplacian, divergence, and gradient, respectively, on $\Sigma$. $\mathbf{n}$ is the outward unit normal, $H$=div$_{\Sigma}\mathbf{n}$ is the mean curvature, $A$ is the second fundamental form, and $x$ is the position vector. With this convection, the mean curvature $H$ is $n/r$ on the sphere $\mathbb{S}^n\subset \mathbb{R}^{n+1}$ of radius $r$. 

First, recall the operators $\mathcal{L}$ and $L$ defined by 
\begin{equation}\label{12}
\mathcal{L}=\Delta-\frac{1}{2}\langle x, \nabla \cdot \rangle,
\end{equation}
\begin{equation}\label{13}
L=\Delta-\frac{1}{2}\langle x, \nabla \cdot \rangle+\abs{A}^2+\frac{1}{2}.
\end{equation}

\begin{lemma}[{\cite{CM1}}]\label{15}
If $\Sigma^n \subset \mathbb{R}^{n+1}$ is a smooth self-shrinker, then 
\begin{equation}\label{16}
\mathcal{L}\abs{x}^2=2n-\abs{x}^2,
\end{equation}
\begin{equation}\label{17}
\mathcal{L}H^2=2(\frac{1}{2}-\abs{A}^2)H^2+2\abs{\nabla H}^2,
\end{equation}
\begin{equation}\label{18}
\mathcal{L}\abs{A}^2=2(\frac{1}{2}-\abs{A}^2)\abs{A}^2+2\abs{\nabla A}^2.
\end{equation}
\end{lemma}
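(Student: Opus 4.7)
The plan is to prove the three identities by direct computation from the self-shrinker equation $H = \langle x, \mathbf{n}\rangle/2$, together with standard Euclidean hypersurface identities: the position-vector Laplacian $\Delta x = -H\mathbf{n}$, the Weingarten formula $\nabla_i \mathbf{n} = h_{ij}e_j$, Codazzi's equation $\nabla^i h_{ij} = \nabla_j H$, and Simons' identity. A unifying tool is the scalar product rule $\mathcal{L}(f^2) = 2f\,\mathcal{L}f + 2|\nabla f|^2$ (immediate from Leibniz for $\Delta$ and for the drift term), together with its tensorial analogue $\mathcal{L}|A|^2 = 2\langle A, \mathcal{L}A\rangle + 2|\nabla A|^2$ obtained by writing $|A|^2 = h^{ij}h_{ij}$. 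These will reduce \eqref{17} and \eqref{18} to linear pointwise identities on $H$ and $h_{ij}$.

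For \eqref{16}, I would first compute $\Delta|x|^2 = 2|\nabla x|^2 + 2\langle x, \Delta x\rangle = 2n - 2H\langle x, \mathbf{n}\rangle$, using the fact that $\nabla x$ is isometric on the tangent bundle. Separately, $\nabla |x|^2 = 2x^T$, where $x^T$ denotes the tangential projection, so $\langle x, \nabla|x|^2\rangle = 2|x^T|^2 = 2(|x|^2 - \langle x, \mathbf{n}\rangle^2)$. Substituting the self-shrinker relation $\langle x, \mathbf{n}\rangle = 2H$ into the difference $\Delta|x|^2 - \tfrac12\langle x,\nabla|x|^2\rangle$, the $H^2$ contributions cancel and \eqref{16} drops out.

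For \eqref{17} and \eqref{18}, the key intermediate facts I would establish are the pointwise identities
$$\mathcal{L}H = \left(\frac{1}{2} - |A|^2\right)H \qquad \text{and} \qquad \mathcal{L}h_{ij} = \left(\frac{1}{2} - |A|^2\right)h_{ij},$$
from which \eqref{17} follows by the scalar product rule with $f = H$, and \eqref{18} follows from the tensorial version contracted with $h^{ij}$. To derive the scalar identity, I differentiate $2H = \langle x, \mathbf{n}\rangle$ once to get $\nabla_i H = \tfrac12 h_{ij}x_j^T$ with $x_j^T = \langle x, e_j\rangle$, then take another divergence using Codazzi and the projection identity $\nabla_i x_j^T = \delta_{ij} - \langle x, \mathbf{n}\rangle h_{ij}$ (obtained by splitting $\nabla_i x = e_i$ into tangent and normal parts via Weingarten). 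After substituting the self-shrinker relation, this produces $\Delta H = \tfrac12 \langle x, \nabla H\rangle + \tfrac12 H - |A|^2 H$, and the drift term in $\mathcal{L}$ absorbs the first summand.

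The main obstacle is the tensorial identity $\mathcal{L}h_{ij} = (1/2 - |A|^2)h_{ij}$. Its proof combines Simons' identity $\Delta h_{ij} = \nabla_i\nabla_j H + H\,h_{ik}h_{kj} - |A|^2 h_{ij}$ for hypersurfaces in $\mathbb{R}^{n+1}$ with a second differentiation of $\nabla_j H = \tfrac12 h_{jk}x_k^T$ to compute $\nabla_i\nabla_j H$. The cubic term $Hh_{ik}h_{kj}$ from Simons must cancel against the analogous cubic term generated when differentiating $\langle x, \mathbf{n}\rangle h_{ij}$ via Weingarten, while the remaining $\tfrac12 \langle x, \nabla h_{ij}\rangle$ gets absorbed by the drift; tracking these cancellations across the tensor indices and sign conventions is the most delicate step. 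Once the tensorial identity is in hand, contracting with $h^{ij}$ and invoking the tensorial product rule yields \eqref{18} immediately.
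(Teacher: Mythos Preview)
Your argument is correct and is exactly the standard Colding--Minicozzi derivation: compute $\mathcal{L}|x|^2$ directly from $\Delta x=-H\mathbf{n}$ and the self-shrinker relation, then obtain \eqref{17} and \eqref{18} from the linear identities $\mathcal{L}H=(\tfrac12-|A|^2)H$ and $\mathcal{L}h_{ij}=(\tfrac12-|A|^2)h_{ij}$ via the product rule, with the latter coming from Simons' identity together with $\nabla_i\nabla_j H=\tfrac12\langle x,\nabla h_{ij}\rangle+\tfrac12 h_{ij}-Hh_{ik}h_{kj}$ so that the cubic terms cancel. Note, however, that the present paper does not supply its own proof of this lemma at all: it simply quotes the result from \cite{CM1} and moves on to Corollary~\ref{20}. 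So there is nothing in the paper to compare against; what you have written is precisely the proof one finds in the cited reference.
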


A direct consequence of Lemma \ref{15} is the following corollary.

\begin{corollary}\label{20}
Let $\Sigma^n \subset \mathbb{R}^{n+1}$ be a smooth self-shrinker. If $\abs{A}^2\leq\frac{1}{2}$, then $\Sigma$ is a generalized cylinder $\mathbb{S}^k \times \mathbb{R}^{n-k}$ for some $k\leq n$. Moreover, if $\abs{A}^2<\frac{1}{2}$, then $\Sigma$ is a hyperplane.

\end{corollary}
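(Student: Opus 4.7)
The plan is to invoke the identity in equation \eqref{18},
\[
\mathcal{L}\abs{A}^2 = 2\left(\tfrac{1}{2}-\abs{A}^2\right)\abs{A}^2 + 2\abs{\nabla A}^2,
\]
and observe that under the hypothesis $\abs{A}^2 \leq 1/2$, both summands on the right are pointwise nonnegative, so $\mathcal{L}\abs{A}^2 \geq 0$ on $\Sigma$.

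The main step is to upgrade this pointwise inequality to $\mathcal{L}\abs{A}^2 \equiv 0$, using the polynomial volume growth together with the fact that $\mathcal{L}$ is self-adjoint against the Gaussian weight $e^{-\abs{x}^2/4}$. I would choose a cutoff $\eta_R$ equal to $1$ on $B_R\cap\Sigma$, supported in $B_{2R}\cap\Sigma$, with $\abs{\nabla \eta_R} \leq C/R$, and integrate by parts to get
\[
\int_\Sigma \eta_R^2\,(\mathcal{L}\abs{A}^2)\, e^{-\abs{x}^2/4}\, d\mu \;=\; -2\int_\Sigma \eta_R \langle\nabla\eta_R,\nabla\abs{A}^2\rangle\, e^{-\abs{x}^2/4}\, d\mu.
\]
A pointwise Kato-type bound $\abs{\nabla\abs{A}^2}^2 \leq 4\abs{A}^2\abs{\nabla A}^2 \leq 2\abs{\nabla A}^2$ (using $\abs{A}^2\leq 1/2$) gives $\mathcal{L}\abs{A}^2 \geq \abs{\nabla\abs{A}^2}^2$. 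Combining this with Cauchy--Schwarz on the right-hand side and absorbing should yield
\[
\int_\Sigma \eta_R^2\,(\mathcal{L}\abs{A}^2)\, e^{-\abs{x}^2/4}\, d\mu \;\leq\; \frac{C}{R^2}\int_\Sigma e^{-\abs{x}^2/4}\, d\mu,
\]
and the right-hand integral is finite thanks to polynomial volume growth. Sending $R\to\infty$ and invoking monotone convergence forces $\int_\Sigma \mathcal{L}\abs{A}^2 \cdot e^{-\abs{x}^2/4}\, d\mu = 0$, so $\mathcal{L}\abs{A}^2 \equiv 0$.

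At that point both nonnegative terms in \eqref{18} vanish identically: $\nabla A \equiv 0$ and $(\tfrac{1}{2}-\abs{A}^2)\abs{A}^2 \equiv 0$. The first forces $\abs{A}^2$ to be locally constant, so on each connected component it equals either $0$ or $1/2$. If $\abs{A}^2\equiv 0$, $\Sigma$ is totally geodesic and hence a hyperplane (the $k=0$ case). Otherwise $\abs{A}^2\equiv 1/2$ with $\nabla A\equiv 0$, and the classical classification of complete hypersurfaces in Euclidean space with parallel second fundamental form identifies $\Sigma$ up to rigid motion as $\mathbb{S}^k(r)\times\mathbb{R}^{n-k}$ for some $1\leq k\leq n$; the self-shrinker equation $H=\langle x,\mathbf{n}\rangle/2$ then pins down $r=\sqrt{2k}$, giving the generalized cylinder. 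For the final assertion, strict inequality $\abs{A}^2<1/2$ makes $(\tfrac{1}{2}-\abs{A}^2)>0$ everywhere, forcing $\abs{A}^2\equiv 0$, hence $\Sigma$ is a hyperplane.

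The main obstacle is the cutoff estimate. The Kato-type inequality is exactly what is needed to dominate the cross term $\langle\nabla\eta_R,\nabla\abs{A}^2\rangle$ by $\mathcal{L}\abs{A}^2$ itself, which closes the estimate without any a priori pointwise bound on $\abs{\nabla A}$; the hypothesis $\abs{A}^2\leq 1/2$ is used twice, once to make $\mathcal{L}\abs{A}^2$ nonnegative and once in the Kato absorption.
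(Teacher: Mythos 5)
Your proposal is correct and is exactly the argument the paper has in mind: the paper presents this corollary as ``a direct consequence'' of the identity \eqref{18}, and the intended route is precisely your weighted integration by parts against $e^{-\abs{x}^2/4}$ (justified by polynomial volume growth), forcing $\nabla A\equiv 0$ and $(\tfrac12-\abs{A}^2)\abs{A}^2\equiv 0$, followed by the classification of hypersurfaces with parallel second fundamental form. The Kato-type absorption you use to close the cutoff estimate without an a priori bound on $\abs{\nabla A}$ is the standard device from Colding--Minicozzi and Cao--Li, so no further comment is needed.
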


Next, we introduce the concepts of the $F$-functional and the entropy of a hypersurface.
\begin{definition}\label{21}
For $t_0 >0$ and $x_0 \in \mathbb{R}^{n+1}$, 
the $F$-functional $F_{x_0,t_0}$ of a hypersurface $M\subset\mathbb{R}^{n+1}$ is defined as
\begin{equation}\label{22}
F_{x_0,t_0}(M)=(4\pi t_0)^{-\frac{n}{2}} \int_{M} {\textrm{e}^{-\frac{|x-x_0|^2}{4t_0}}},
\end{equation}
and the entropy of $M$ is given by 
\begin{equation}\label{23}
\lambda(M)=\sup_{x_0,t_0} F_{x_0,t_0}(M),
\end{equation}
here the supremum is taking over all $t_0 >0$ and $x_0 \in \mathbb{R}^{n+1}$.
\end{definition}

\section{Proof of Theorem \ref{8}}
\begin{proof}
By Corollary \ref{20}, if $\abs{A}^2<\frac{1}{2}$, then $\Sigma$ is a hyperplane in $\mathbb{R}^3$. Therefore, in the following we only consider the case when $\abs{A}^2\geq\frac{1}{2}$.

For any point $p\in\Sigma$, we can choose a local orthonormal frame $\{e_1,e_2\}$ such that the coefficients of the second fundamental form $h_{ij}=\lambda_i \delta_{ij}$ for $i,j=1,2$, then 
\begin{equation}\label{25}
\abs{\nabla A}^2=h_{111}^2+h_{222}^2+3h_{112}^2+3h_{221}^2=\abs{A}^2(\abs{A}^2-\frac{1}{2}),
\end{equation}
\begin{equation}\label{26}
\abs{\nabla H}^2=(h_{111}+h_{221})^2+(h_{112}+h_{222})^2.
\end{equation}
Since  $\abs{A}^2=$constant, we have
\begin{equation}\label{27}
h_{11}h_{111}+h_{22}h_{221}=h_{11}h_{112}+h_{22}h_{222}=0.
\end{equation}

First, we prove $\abs{x}>0$ on $\Sigma$. We argue by contraction. Suppose $\Sigma$ goes through the origin, then at the origin, we have $H=\abs{\nabla H}=0$, therefore
\begin{equation*}
h_{11}+h_{22}=h_{111}+h_{221}=h_{112}+h_{222}=0.
\end{equation*}
Combining this with (\ref{27}), we get
\begin{equation*}
h_{111}=h_{222}=h_{112}=h_{221}=0.
\end{equation*}
This implies that $\abs{\nabla A}^2=\abs{A}^2(\abs{A}^2-\frac{1}{2})=0$, i.e., $\abs{A}^2=\frac{1}{2}$. By Corollary \ref{20}, we conclude that $\Sigma$ is $\mathbb{S}^2$ or $\mathbb{S}^1 \times \mathbb{R}$. However, this contradicts the assumption that $\Sigma$ goes through the origin.\\

Note that $\Sigma$ has polynomial volume growth implies that $\Sigma$ is proper (see Theorem 4.1 in \cite{CZ}) and by the maximum principle $\Sigma$ intersects $\mathbb{S}^2(2)$, so there exists a point $p\in \Sigma$ minimize $\lvert x \rvert$. 

Now, at point $p$, we have $\abs{x}>0 $ and  $x^T=0$, where $x^T$ is the tangential projection of $x$. This implies that $4{H}^2(p)=\abs{x}^2(p)$, $\nabla H(p)=0$, and thus
\begin{equation*}
h_{111}+h_{221}=h_{112}+h_{222}=0,
\end{equation*}
By (\ref{27}), we get
\begin{equation}
h_{111}(h_{11}-h_{22})=h_{222}(h_{11}-h_{22})=0.
\end{equation}

If $h_{111}=h_{222}=0$, then $\abs{\nabla A}^2=0$ and therefore $\abs{A}^2=\frac{1}{2}$. By Corollary \ref{20}, we conclude that $\Sigma$ is a generalized cylinder.

If $h_{11}=h_{22}$, then we have 
\begin{equation}\label{30}
\abs{A}^2=2h_{11}^2=\frac{H^2(p)}{2}=\frac{\abs{x}^2(p)}{8}.
\end{equation}
Since every smooth complete self-shrinker must intersect the sphere $\mathbb{S}^2(2)$, we conclude that $\abs{x}(p)\leq 2$. By (\ref{30}), this gives
\begin{equation*}
\abs{A}^2\leq \frac{1}{2},
\end{equation*} 
then the theorem follows immediately from Corollary \ref{20}.

\end{proof}

\section{Proof of Theorem \ref{10}}
First, we state two key ingredients from \cite{CM5}. The first one is the rigidity theorem for the generalized cylinders and the second one is the compactness theorem for self-shrinkers.

\begin{theorem}[{\cite{CM5}}]\label{33}
Given $n,\lambda_0$ and $C$, there exists $R=R(n,\lambda_0,C)$ so that if $\Sigma^n \subset \mathbb{R}^{n+1}$ is a self-shrinker with entropy $\lambda(\Sigma) \leq \lambda_0$ satisfying 
\begin{itemize}
\item $\Sigma$ is smooth in $B_R$ with $ H \geq 0$ and $\abs{A} \leq C$ on $B_R \cap \Sigma$,
\end{itemize}
then $\Sigma$ is a generalized cylinder $\mathbb{S}^k \times \mathbb{R}^{n-k}$ for some $k \leq n$.
\end{theorem}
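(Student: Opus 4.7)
The plan is to argue by contradiction and compactness, reducing the statement to the classification of self-shrinkers with $H \geq 0$ from \cite{CM1}. Suppose the theorem fails. Then there is a sequence $R_i \to \infty$ and self-shrinkers $\Sigma_i \subset \mathbb{R}^{n+1}$ with $\lambda(\Sigma_i) \leq \lambda_0$ satisfying the smoothness, $H \geq 0$, and $\abs{A} \leq C$ hypotheses on $B_{R_i} \cap \Sigma_i$, but with no $\Sigma_i$ a generalized cylinder.

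Next I would invoke the compactness theorem for self-shrinkers that the text flags as the second ingredient from \cite{CM5}: a uniform entropy bound together with a uniform curvature bound in a ball yields smooth subsequential convergence on that ball. A diagonal extraction over balls of radius $1,2,3,\dots$ produces a smooth limit $\Sigma_\infty$, which is a smooth complete embedded self-shrinker with $\lambda(\Sigma_\infty) \leq \lambda_0$, $H \geq 0$, and $\abs{A} \leq C$ on all of $\Sigma_\infty$. Since an entropy bound forces Euclidean (in particular polynomial) volume growth, the classification of \cite{CM1} applies and forces $\Sigma_\infty = \mathbb{S}^k \times \mathbb{R}^{n-k}$ for some $k \leq n$.

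The remaining step, which I expect to be the main obstacle, is to upgrade this smooth convergence into an actual contradiction: I need that $\Sigma_i \to \mathbb{S}^k \times \mathbb{R}^{n-k}$ smoothly on every compact set implies $\Sigma_i$ itself is a generalized cylinder for large $i$. This is a uniqueness-near-cylinders statement, saying that any self-shrinker sufficiently $C^2$-close to a generalized cylinder on a sufficiently large compact set must itself be such a cylinder. I would prove it by writing $\Sigma_i$ as a small normal graph over the limit cylinder on a large tubular neighborhood, observing that the graphing function satisfies a perturbed shrinker equation governed by the drift operator $L$ from (\ref{13}), and then exploiting the $F$-stability of generalized cylinders together with a \L{}ojasiewicz-type inequality to force this graphing function to vanish identically. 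The only zero and unstable modes of $L$ on $\mathbb{S}^k \times \mathbb{R}^{n-k}$ correspond to rigid translations and dilations, and these can be absorbed in advance by translating and rescaling the $\Sigma_i$. Once this rigidity is in hand the $\Sigma_i$ must coincide with generalized cylinders for large $i$, contradicting the choice of the sequence and completing the proof.
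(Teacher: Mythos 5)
The paper itself offers no proof of this statement: it is quoted directly from \cite{CM5} and used as a black box, so there is no internal argument to compare against. That said, your opening reduction is essentially the right one and matches how \cite{CM5} derive this theorem from their core rigidity result: argue by contradiction, apply the compactness lemma (Lemma \ref{34}) to a sequence with $R_i \to \infty$, note that the entropy bound gives polynomial volume growth and that $H \geq 0$ and $\abs{A} \leq C$ pass to the complete smooth limit, and invoke the classification of \cite{CM1} to identify the limit as $\mathbb{S}^k \times \mathbb{R}^{n-k}$. Up to that point the proposal is sound.

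The gap is the step you yourself flag as ``the main obstacle'': that a self-shrinker sufficiently close to a generalized cylinder on a large compact set must be a cylinder. This is not a finishing lemma --- it is the main theorem of \cite{CM5} and occupies the bulk of that paper, and your sketch of it would not go through as written. First, the normal graph is only defined over a compact piece of the cylinder (indeed $\Sigma_i$ is only assumed smooth in $B_{R_i}$ and may a priori be a singular varifold outside), so there is no global integration by parts and no direct way to use $F$-stability or a \L{}ojasiewicz inequality to force the graphing function to vanish; the actual argument is a delicate iteration showing that cylindrical control on a ball improves and extends to a strictly larger ball, and that extension scheme is where all the work lies. Second, the claim that the only zero and unstable modes of $L$ on $\mathbb{S}^k \times \mathbb{R}^{n-k}$ come from translations and dilations is false for $0<k<n$: writing $L=\mathcal{L}+1$ on the cylinder, the kernel contains not only the rotation modes (degree-one spherical harmonics times linear functions of the axis variables) but also modes quadratic in the axis variables (degree-two Hermite polynomials times constants on the sphere factor), and these latter Jacobi fields are not generated by any rigid motion or dilation. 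Their presence is precisely why the cylinder is not an integrable critical point in the naive sense and why \cite{CM5} must establish \L{}ojasiewicz-type inequalities adapted to the shrinker equation rather than absorbing the kernel by a group action as you propose. As it stands, your final step asserts the theorem rather than proving it; the honest version of your argument is to cite \cite{CM5} for that step, which is exactly what the paper does.
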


\begin{lemma}[{\cite{CM5}}]\label{34}
Let $\Sigma_i \subset \mathbb{R}^{n+1}$ be a sequence of $F$-stationary varifolds with $\lambda(\Sigma_i) \leq \lambda_0$ and
\begin{equation}
B_{R_i} \cap \Sigma_i \textrm{ is smooth with } \abs{A} \leq C,
\end{equation}
where $R_i \rightarrow \infty$. Then there exists a subsequence $\Sigma_i'$ that converges smoothly and with multiplicity one to a complete embedded self-shrinker $\Sigma$ with $\abs{A} \leq C$ and
\begin{equation}
\lim_{i \rightarrow \infty} \lambda(\Sigma_i')=\lambda(\Sigma).
\end{equation}
\end{lemma}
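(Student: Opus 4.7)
The plan is a standard compactness--contradiction argument that assembles Lemma \ref{34}, Corollary \ref{20}, and the rigidity result Theorem \ref{33}.

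Suppose for contradiction that no such $\delta = \delta(n,\lambda_0)$ exists. Then one can choose a sequence $\Sigma_i \subset \mathbb{R}^{n+1}$ of smooth embedded self-shrinkers with $\lambda(\Sigma_i) \leq \lambda_0$ and $\abs{A_i}^2 \leq \tfrac{1}{2} + \tfrac{1}{i}$, none of which is a generalized cylinder. Since $\abs{A_i} \leq C := \sqrt{3/2}$ uniformly and each $\Sigma_i$ is smooth on all of $\mathbb{R}^{n+1}$, applying Lemma \ref{34} with $R_i \to \infty$ extracts, after passing to a subsequence, smooth multiplicity-one convergence of $\Sigma_i$ to a complete embedded self-shrinker $\Sigma_\infty$ with $\abs{A_\infty}^2 \leq \tfrac{1}{2}$. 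By Corollary \ref{20}, $\Sigma_\infty = \mathbb{S}^k \times \mathbb{R}^{n-k}$ for some $0 \leq k \leq n$.

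Now let $R := R(n,\lambda_0,C)$ be the radius produced by Theorem \ref{33}. If $k \geq 1$, the limit has positive constant mean curvature $H_\infty = \sqrt{k/2}$, so smooth convergence on $B_{R+1}$ implies $H_i > 0$ on $B_R \cap \Sigma_i$ for all large $i$. Together with $\lambda(\Sigma_i) \leq \lambda_0$ and $\abs{A_i} \leq C$, Theorem \ref{33} then forces $\Sigma_i$ itself to be a generalized cylinder, contradicting our choice.

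The hardest step is the hyperplane case $k = 0$, where $H_\infty \equiv 0$ and smooth convergence does not by itself yield $H_i \geq 0$ on $B_R$. My plan is to first note that if $\abs{A_i}^2 < \tfrac{1}{2}$ pointwise on $\Sigma_i$ then Corollary \ref{20} already identifies $\Sigma_i$ as a hyperplane, contrary to hypothesis; thus there exists $p_i \in \Sigma_i$ with $\abs{A_i}^2(p_i) \geq \tfrac{1}{2}$, and the smooth convergence to the flat limit forces $\abs{p_i} \to \infty$. Ruling out such an escaping point of large curvature is the essential difficulty, since self-shrinkers carry no translation invariance that would allow a clean re-centering. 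My intended fix is to invoke a strengthened form of Theorem \ref{33} in which $H \geq 0$ is relaxed to $H \geq -\epsilon_i$ with $\epsilon_i \to 0$---permissible here because $H_i \to 0$ uniformly on $B_R$ by smooth convergence to the hyperplane---thereby recovering a generalized cylinder for large $i$ and completing the contradiction.
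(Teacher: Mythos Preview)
Your proposal does not address the stated Lemma~\ref{34} at all; it is a proof of Theorem~\ref{10}. Lemma~\ref{34} is a compactness result for sequences of $F$-stationary varifolds with bounded entropy and locally bounded $\abs{A}$, and the paper does not prove it --- it is quoted from \cite{CM5}. An actual proof would proceed through local graphical estimates from the curvature bound, Arzel\`a--Ascoli to extract a smooth limit, and (as the remark after the lemma indicates) an $L$-stability argument to force multiplicity one. Your write-up instead \emph{invokes} Lemma~\ref{34} as a black box.

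Read as a proof of Theorem~\ref{10}, your argument follows the paper's own proof essentially step for step: argue by contradiction, apply Lemma~\ref{34} to pass to a smooth limit with $\abs{A}^2 \leq \tfrac{1}{2}$, identify the limit as $\mathbb{S}^k \times \mathbb{R}^{n-k}$ via Corollary~\ref{20}, and then apply the rigidity Theorem~\ref{33} on $B_R$ to force each $\Sigma_i$ to be a cylinder. The paper does not split off the hyperplane case $k=0$; it simply asserts that closeness to the limit on $B_{2R}$ gives $H \geq 0$ on $B_R$. You are correct that this is not automatic when $H_\infty \equiv 0$, so your extra caution is well-placed. That said, your proposed remedy --- a hypothetical relaxation of Theorem~\ref{33} to $H \geq -\epsilon_i$ --- is asserted rather than proved, so your treatment of the $k=0$ case is, in the end, no more complete than the paper's.
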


\begin{remark}
In the above lemma, the entropy bound is used to guarantee that the convergence is with finite multiplicity. Moreover, if the multiplicity is great than one ,then the limit is $L$-stable. Using the fact that there are no complete $L$-stable self-shrinkers with polynomial volume growth  gives the multiplicity of the convergence must be one. 
\end{remark}

Now we are ready to give the proof of Theorem \ref{10}.

\begin{proof}[Proof of Theorem \ref{10}]
We will argue by contradiction, so suppose there is a sequence of smooth embedded self-shrinkers $\Sigma_i \neq \mathbb{S}^k \times \mathbb{R}^{n-k}$ ($k \leq n$) with $\lambda(\Sigma_i) \leq \lambda_0$ and
\begin{equation}\label{35}
\abs{A}^2 \leq \frac{1}{2}+\frac{1}{i}.
\end{equation}

By Lemma \ref{34} there exists a subsequence $\Sigma_i$ (still denoted by $\Sigma_i$) that converges smoothly and with multiplicity one to a complete embedded self-shrinker $\Sigma$.\\
By (\ref{35}), we can conclude that $\Sigma$ satisfies $\abs{A}^2 \leq \frac{1}{2}$, and thus, $\Sigma$ is a generalized cylinder.\\
Now we choose the $R$ in Theorem \ref{33}, and for $N$ sufficiently large, $\Sigma_m$ is very close to $\Sigma$ on $B_{2R} \cap \Sigma_m$ for $m \geq N$, i.e.,
\begin{equation*}
\Sigma_m \textrm{ satisfies } H \geq 0 \textrm{ and } \abs{A} \leq 1 \textrm{ on } B_R \cap \Sigma_m,
\end{equation*}
then by the rigidity theorem of self-shrinkers, Theorem \ref{33}, $\Sigma_m$ is a generalized cylinder. However, this contradicts our assumption that $\Sigma_i$ is not a generalized cylinder, completing the proof.
\end{proof}

\bibliographystyle{alpha}
\bibliography{REF}

\begin{thebibliography}{CIM13}

\bibitem[AL86]{AbL}
U.~Abresch and J.~Langer.
\newblock The normalized curve shortening flow and homothetic solutions.
\newblock {\em JDG}, 23(2):175--196, 1986.

\bibitem[Ang92]{A}
S.B. Angenent.
\newblock Shrinking doughnuts.
\newblock In {\em Nonlinear diffusion equations and their equilibrium states, 3
  ({G}regynog, 1989)}, volume~7 of {\em Progr. Nonlinear Differential Equations
  Appl.}, pages 21--38. Birkh\"auser Boston, Boston, MA, 1992.

\bibitem[Cho94]{Ch}
D.~Chopp.
\newblock Computation of self-similar solutions for mean curvature flow.
\newblock {\em Experiment. Math.}, 3(1):1--15, 1994.

\bibitem[CIM13]{CM5}
T.H. Colding, T.~Ilmanen, and W.P. Minicozzi.
\newblock Rigidity of generic singularities of mean curvature flow.
\newblock {\em arXiv preprint arXiv:1304.6356}, 2013.

\bibitem[CL13]{CL1}
H.~Cao and H.~Li.
\newblock A gap theorem for self-shrinkers of the mean curvature flow in
  arbitrary codimension.
\newblock {\em Calc. Var. Partial Differential Equations}, 46(3-4):879--889,
  2013.

\bibitem[CM12]{CM1}
T.H. Colding and W.P. Minicozzi.
\newblock Generic mean curvature flow {I}: generic singularities.
\newblock {\em Ann. of Math.}, 175(2):755--833, 2012.

\bibitem[CZ13]{CZ}
X.~Cheng and D.~Zhou.
\newblock Volume estimate about shrinkers.
\newblock {\em Proc. Amer. Math. Soc.}, 141(2):687--696, 2013.

\bibitem[DX11]{DX1}
Q.~Ding and Y.L. Xin.
\newblock The rigidity theorems of self shrinkers.
\newblock {\em Trans. A.M.S, to appear, arXiv:1105.4962}, 2011.

\bibitem[Hui90]{Hui90}
G.~Huisken.
\newblock Asymptotic behavior for singularities of the mean curvature flow.
\newblock {\em JDG}, 31(1):285--299, 1990.

\bibitem[Ilm97]{I1}
T.~Ilmanen.
\newblock Singularities of mean curvature flow of surfaces.
\newblock {\em preprint}, 1997.

\bibitem[KM10]{KM}
S.~Kleene and N.M. M{\o}ller.
\newblock Self-shrinkers with a rotational symmetry.
\newblock {\em arXiv preprint, arXiv:1008.1609}, 2010.

\bibitem[M{\o}l11]{M1}
N.M. M{\o}ller.
\newblock Closed self-shrinking surfaces in $\mathbf{R}^3$ via the torus.
\newblock {\em arXiv preprint, arXiv:1111.7318}, 2011.

\bibitem[Ngu09]{N1}
X.H. Nguyen.
\newblock Construction of complete embedded self-similar surfaces under mean
  curvature flow. {I}.
\newblock {\em Trans. Amer. Math. Soc.}, 361(4):1683--1701, 2009.

\end{thebibliography}

\end{document}